\def\@seccntformat#1{\csname the#1\endcsname.\ } % the column after a section number
\title{On minimal subspace $Z_p$-null designs%
\thanks{The work was funded by the Russian Science Foundation under grant 18-11-00136}}
\author{Denis S. Krotov}
\date{}
\newtheorem{theorem}{Theorem}
\newtheorem{proposition}[theorem]{Proposition}
\newtheorem{corollary}[theorem]{Corollary}
\theoremstyle{remark}
\newcommand\FF{\mathbb{F}}
\newcommand\ZZ{\mathbb{Z}}
\newcommand\Cc{\mathcal{C}}
\newcommand\Zq{\ZZ_q}
\newcommand\Zr{\ZZ_r}
\newcommand\Fq{\FF_q}
\newcommand\Jq[2][q]{J_{#1}(n,#2)}
\newcommand\JJq[2][q]{\Jq[#1]{{\ge}#2}}
\newcommand\vv[1]{\boldsymbol{#1}}
\begin{document}

\maketitle

\begin{abstract}
Let $q$ be a power of a prime $p$,
and let $V$ be an $n$-dimensional space
over the field GF$(q)$.
A $Z_p$-valued function $C$ on the 
set of $k$-dimensional subspaces of $V$
is called a $k$-uniform 
$Z_p$-null design of strength $t$
if for every $t$-dimensional subspace $y$ of $V$
the sum of $C$ over the $k$-dimensional superspaces 
of $y$ equals $0$.
For $q=p=2$ and $0\le t<k<n$, 
we prove that the minimum number of non-zeros
of a non-void $k$-uniform $Z_p$-null design of strength $t$
equals $2^{t+1}$. 
For $q>2$, we give lower and upper bounds 
for that number.
\end{abstract}

\section{Introduction}

Let $p$ be a prime number, 
and let $q=p^s$ where $s$ is a positive integer. 
We denote by $\Fq$ the finite field GF$(q)$ of order $q$
and by $\ZZ_s$ the factor ring $\ZZ / s \ZZ$.
By $\Jq{k}$, 
we denote the set of $k$-dimensional subspaces 
of the $n$-dimensional space $\Fq^n$ over $\Fq$.
We will also use the notation 
$$ \JJq{ t} := \bigcup_{k=t}^n \Jq{k}.  $$
Let $\mathbb{G}$ be an abelian group.
A function
$\Cc: \JJq{ t} \to \mathbb{G}$
is called a 
\emph{subspace $\mathbb{G}$-null design of strength $t$}
(depending on the context, 
we will omit ``subspace'', ``$\mathbb{G}$-'', 
and/or ``of strength $t$'') 
if for every $\vv{y}$ from $\Jq{t}$ we have
$\Cc({\ge}\vv{y})=0$, where
$$\Cc({\ge}\vv{y}):= \sum_{\vv{x}\in \JJq{t}:\, \vv{y}\subset \vv{x}} \Cc(\vv{x}). $$
The identity-zero null design is called \emph{void}.
A null design is called \emph{$k$-uniform} or simply \emph{uniform}
if all its nonzeros lie in $\Jq{k}$. 
Our main goal is to study
 the minimum number of non-zeros
of uniform null designs; 
however, 
it is convenient for some statements 
to be proved in a general form, 
for unrestricted null designs.

The following treatment of the strength 
is well known for many kinds of designs; 
however, usually it holds only for uniform designs 
but not in the unrestricted case in general.
So, it is notable that we do not need the uniformity in the following
proposition.

\begin{proposition}\label{p:tt-1}
Let $q$ and $r$ be powers of a prime $p$ such that 
$p \le r \le q$.
If $\Cc:\,\JJq{t}\to \Zr$
satisfies $\Cc({\ge}\vv{z})=\lambda$ for all
$\vv{z}$ from $\Jq{t}$ and some constant 
$\lambda$ from $\Zr$,
then $\Cc$ 
(continued by zeros to the extended domain)
satisfies $\Cc({\ge}\vv{y})=\lambda$ 
for all
$\vv{z}$ in $\Jq{\tau}$, $\tau \in \{ 0,\ldots,t \}$.
In particular, if $\Cc$ is a $\Zr$-null design of strength $t$,
then it is 
a $\Zr$-null design of strength $\tau$ 
for any $\tau \in \{ 0,\ldots,t \}$.
\end{proposition}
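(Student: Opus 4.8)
The plan is to fix a subspace $\vv{y}\in\Jq{\tau}$ with $\tau\le t$ and to evaluate the sum of $\Cc({\ge}\vv{z})$ over all $t$-dimensional $\vv{z}$ containing $\vv{y}$ in two ways. On one side, every summand equals $\lambda$ by hypothesis, so the total is $N\lambda$, where $N$ is the number of $t$-subspaces through $\vv{y}$. On the other side, I would expand $\Cc({\ge}\vv{z})=\sum_{\vv{x}\supseteq\vv{z}}\Cc(\vv{x})$ and interchange the order of summation: a term $\Cc(\vv{x})$ with $\vv{x}\supseteq\vv{y}$ and $\dim\vv{x}\ge t$ is counted once for each $t$-subspace $\vv{z}$ sandwiched between $\vv{y}$ and $\vv{x}$. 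Counting such $\vv{z}$ via the quotient $\vv{x}/\vv{y}$ gives the Gaussian binomial coefficient $\binom{\dim\vv{x}-\tau}{t-\tau}_q$ (the number of $(t-\tau)$-dimensional subspaces of a space of dimension $\dim\vv{x}-\tau$), and likewise $N=\binom{n-\tau}{t-\tau}_q$. This yields, in $\Zr$,
$$\sum_{\vv{x}\in\JJq{t}:\,\vv{y}\subset\vv{x}} \binom{\dim\vv{x}-\tau}{t-\tau}_q\, \Cc(\vv{x}) \;=\; \binom{n-\tau}{t-\tau}_q\,\lambda.$$

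The heart of the argument is a congruence for these coefficients. Since $r$ and $q$ are both powers of $p$ with $r\le q$, we have $r\mid q$, hence $q\equiv 0\pmod r$. I would prove that $\binom{a}{b}_q\equiv 1\pmod r$ for all integers $a\ge b\ge 0$ by induction on $b$ using the $q$-Pascal recurrence
$$\binom{a}{b}_q = \binom{a-1}{b-1}_q + q^{b}\,\binom{a-1}{b}_q.$$
For $b\ge 1$ the factor $q^{b}$ is divisible by $r$, so the second term vanishes modulo $r$ and $\binom{a}{b}_q\equiv\binom{a-1}{b-1}_q\pmod r$; descending until $b=0$, where $\binom{a-b}{0}_q=1$, gives the claim.

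Applying this congruence to every coefficient in the displayed identity collapses it: each $\binom{\dim\vv{x}-\tau}{t-\tau}_q$ and the factor $\binom{n-\tau}{t-\tau}_q$ reduce to $1$ modulo $r$, so the identity becomes $\sum_{\vv{x}\supseteq\vv{y}}\Cc(\vv{x})=\lambda$, which is exactly $\Cc({\ge}\vv{y})=\lambda$ for the zero-continued function. As $\vv{y}$ and $\tau\le t$ were arbitrary, this settles the general statement, and the null-design assertion is the case $\lambda=0$. The one point requiring care is the bookkeeping in the interchange of summation — ensuring that only superspaces $\vv{x}$ of $\vv{y}$ contribute and that the continuation by zeros correctly absorbs the subspaces of dimension between $\tau$ and $t-1$. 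The substantive reason uniformity is \emph{not} needed here is precisely that $r\mid q$ forces every Gaussian binomial weight to be $\equiv 1\pmod r$, so the weighted sum ranging over \emph{all} admissible dimensions at once degenerates into the plain sum defining $\Cc({\ge}\vv{y})$.
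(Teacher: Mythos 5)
Your proof is correct, and it reaches the conclusion by a noticeably different organization than the paper, even though the engine is the same: interchange the order of summation and use that $r \mid q$ (both powers of $p$, $r \le q$) makes every relevant incidence count $\equiv 1 \pmod r$. The paper descends only one level, from strength $t$ to strength $t-1$, and then invokes induction on the strength; as a result the only congruence it ever needs is the $b=1$ special case of yours, namely $|\{\vv{z}\in \Jq{t} : \vv{y}\subset\vv{z}\subset\vv{x}\}| = 1+q+\cdots+q^{\dim(\vv{x})-t} \equiv 1 \pmod r$, a plain geometric sum with no $q$-binomial machinery. You instead jump from $t$ directly to an arbitrary $\tau \le t$, which costs you the general lemma $\binom{a}{b}_q \equiv 1 \pmod r$; your induction on $b$ via the $q$-Pascal recurrence $\binom{a}{b}_q = \binom{a-1}{b-1}_q + q^b \binom{a-1}{b}_q$ proves it correctly, since $q \equiv 0 \pmod r$ kills the second term for $b \ge 1$. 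The trade-off: the paper's stepwise route is more elementary per step (and its induction silently handles the zero continuation, since after one step the extended function satisfies the same hypothesis at strength $t-1$), while your one-shot version needs the extra lemma but delivers all strengths $\tau$ simultaneously and isolates the structural reason the proposition holds without uniformity — exactly your closing observation that $r \mid q$ collapses every Gaussian weight $\binom{\dim(\vv{x})-\tau}{t-\tau}_q$, over all dimensions of $\vv{x}$ at once, to $1$, so the weighted sum degenerates to $\Cc({\ge}\vv{y})$. Your bookkeeping points are also handled correctly: after the interchange only $\vv{x} \supseteq \vv{y}$ with $\dim\vv{x} \ge t$ carry nonzero coefficients, the factor $N = \binom{n-\tau}{t-\tau}_q \equiv 1 \pmod r$ on the constant side is covered by the same lemma, and the zero continuation makes the surviving sum literally equal to $\Cc({\ge}\vv{y})$; the null-design claim is indeed the case $\lambda = 0$.
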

\begin{proof}
 For $\vv{y}$ from $\Jq{t-1}$ and 
 $\vv{x}$ from $\JJq{ t}$, we note that
\begin{multline}\label{eq:modr}
 |\{\vv{z}\in \Jq{t}:  \vv{y}\subset \vv{z} \subset \vv{x} \}| 
 = \frac{q^{\dim(\vv{x})-t+1}-1}{q-1}
 \\
 = q^{\dim(\vv{x})-t}+q^{\dim(\vv{x})-t-1}+ \ldots + q^{0} \equiv 1 \bmod r.
\end{multline}
 So, for every $\vv{y}$ from $\Jq{t-1}$ we have 
 \begin{multline*}
   \sum_{\rotatebox{2}{$\scriptstyle\vv{x}\in \JJq{ t}:\,  \vv{y}\subset \vv{x}$}} \Cc(\vv{x})
    \stackrel{\eqref{eq:modr}}{=}
    \sum_{\rotatebox{2}{$\scriptstyle\vv{x}\in \JJq{ t}:\,  \vv{y}\subset \vv{x}$}} \ 
    \sum_{\rotatebox{2}{$\scriptstyle\vv{z}\in \Jq{t}:\,  \vv{y}\subset \vv{z} \subset \vv{x}$}} \Cc(\vv{x}) \\
    =
    \sum_{\rotatebox{2}{$\scriptstyle\vv{z}\in \Jq{t}:\,  \vv{y}\subset \vv{z}$}} \ 
    \sum_{\rotatebox{2}{$\scriptstyle\vv{x}\in \JJq{ t}:\,  \vv{z}\subset \vv{x}$}} \Cc(\vv{x})
    =
    \sum_{\rotatebox{2}{$\scriptstyle\vv{z}\in \Jq{t}:\,  \vv{y}\subset \vv{z}$}} \lambda
    \stackrel{\eqref{eq:modr}}{=}
    \lambda,
 \end{multline*}
and the claim is true for $\tau=t-1$.
By induction, it holds for  
$\tau \in \{ 0,\ldots,t \}$.
\end{proof}

\section{Main results}

\begin{theorem}\label{th:lb}
For any abelian group $\mathbb{G}$,
the minimum number of non-zeros 
in a non-void $\mathbb{G}$-null design $\Cc:\JJq{ t}\to \mathbb{G}$ 
of strength $t$, $0\le t<n$, is $1+\frac{q^{t+1}-1}{q-1}$.
\end{theorem}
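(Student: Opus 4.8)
The plan is to prove matching bounds, with the extremal object being a single $(t+1)$-dimensional subspace together with all of its $t$-dimensional subspaces.

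\emph{Upper bound.} Assuming $\mathbb{G}\neq\{0\}$ (otherwise no non-void design exists), I would fix a $(t+1)$-dimensional subspace $\vv{w}$ of $\Fq^n$ (possible since $t<n$) and a nonzero $g\in\mathbb{G}$, and set $\Cc(\vv{w}):=g$, $\Cc(\vv{y}):=-g$ for each of the $\frac{q^{t+1}-1}{q-1}$ subspaces $\vv{y}$ of dimension $t$ contained in $\vv{w}$, and $\Cc:=0$ elsewhere. This has exactly $1+\frac{q^{t+1}-1}{q-1}$ nonzeros, and I would verify strength $t$ by evaluating $\Cc({\ge}\vv{z})$ for a $t$-dimensional $\vv{z}$: if $\vv{z}\subset\vv{w}$ the only support elements containing $\vv{z}$ are $\vv{z}$ itself and $\vv{w}$, giving $-g+g=0$; if $\vv{z}\not\subset\vv{w}$, no support element contains $\vv{z}$, so the sum is $0$.

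\emph{Lower bound --- main reduction.} Let $\Cc$ be non-void of strength $t$, and let $\vv{x}^{*}$ be a support element of maximal dimension $m$. First note $m\ge t+1$: a design whose support is contained in $\Jq{t}$ would violate $\Cc({\ge}\vv{x}^{*})=0$, since the only superspace of the $t$-dimensional $\vv{x}^{*}$ of dimension $\le m=t$ is $\vv{x}^{*}$ itself. Now for every $t$-dimensional $\vv{y}\subset\vv{x}^{*}$, the identity $\Cc({\ge}\vv{y})=0$ together with $\Cc(\vv{x}^{*})\neq 0$ forces a second support element $\vv{x}\neq\vv{x}^{*}$ with $\vv{y}\subset\vv{x}$; by maximality $\vv{x}^{*}\not\subset\vv{x}$, so the trace $\vv{x}\cap\vv{x}^{*}$ is a \emph{proper} subspace of $\vv{x}^{*}$ containing $\vv{y}$. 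Hence the proper subspaces $\{\vv{x}\cap\vv{x}^{*}:\vv{x}\in\mathrm{supp}(\Cc)\setminus\{\vv{x}^{*}\}\}$ cover all $t$-dimensional subspaces of $\vv{x}^{*}$, and enlarging each trace to a hyperplane of $\vv{x}^{*}$ produces at most $|\mathrm{supp}(\Cc)|-1$ hyperplanes of $\vv{x}^{*}\cong\Fq^{m}$ that together contain every $t$-dimensional subspace.

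\emph{Finishing step and the main obstacle.} It then remains to bound below the number of hyperplanes of $\Fq^{m}$ needed to cover all $t$-dimensional subspaces. Dualizing (hyperplanes to points, $t$-subspaces to $(m-t)$-subspaces), this is the minimum size of a point set meeting every $(m-t)$-dimensional subspace, which by the Bose--Burton theorem equals $\frac{q^{t+1}-1}{q-1}$, attained by the points of a $(t+1)$-dimensional subspace. This gives $|\mathrm{supp}(\Cc)|-1\ge\frac{q^{t+1}-1}{q-1}$, matching the construction. I expect the crux to be exactly this covering bound when $m>t+1$: the naive volume estimate, namely that each hyperplane contains $\binom{m-1}{t}_q$ of the $\binom{m}{t}_q$ subspaces of dimension $t$, yields only $\frac{q^{m}-1}{q^{m-t}-1}$, which is strictly smaller than $\frac{q^{t+1}-1}{q-1}$ as soon as $m>t+1$. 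The key point is that the true covering number is \emph{independent} of $m$, so a counting argument is insufficient and one genuinely needs the exact extremal (blocking-set) input of Bose--Burton; absent that, the difficulty concentrates in controlling support elements of dimension exceeding $t+1$.
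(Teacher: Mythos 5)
Your proposal is correct: both bounds are established, the argument works verbatim for an arbitrary abelian group $\mathbb{G}$ (the only algebraic fact used is that a vanishing sum with one nonzero term must contain a second nonzero term), and the edge cases ($t=0$, and $m=t+1$, where the hyperplanes of $\vv{x}^*$ are the $t$-subspaces themselves) go through. However, your lower bound takes a genuinely different route from the paper's. The upper-bound construction is identical. For the lower bound, the paper argues by backward induction on $t$ with vacuous base $t=n$: given a non-void $\Cc$ of strength $t$, either the restriction $\Cc\big|_{\JJq{t{+}1}}$ is a null design of strength $t+1$ (and the induction hypothesis already gives more nonzeros than needed), or some $\vv{v}\in\Jq{t+1}$ has $\Cc({\ge}\vv{v})\ne 0$; then each of the $\frac{q^{t+1}-1}{q-1}$ sets $U_{\vv{u}}=\{\vv{w}\in\JJq{t{+}1}:\vv{u}\subset\vv{w},\,\vv{v}\not\subset\vv{w}\}$, for $\vv{u}$ a $t$-dimensional subspace of $\vv{v}$, must carry a nonzero, and these sets are pairwise \emph{disjoint}, because distinct $\vv{u},\vv{u}'$ span $\vv{v}$, so any common superspace would contain $\vv{v}$; one further nonzero lies above $\vv{v}$. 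The essential difference is the choice of pivot. The paper pivots at a $(t+1)$-dimensional $\vv{v}$ extracted via the induction dichotomy, and at that dimension the disjointness of the witness sets is automatic, so a trivial count finishes the proof. You pivot instead at a maximal-dimension support element $\vv{x}^*$, whose dimension $m$ may exceed $t+1$, so distinct $t$-subspaces of $\vv{x}^*$ can share a witness; exactly as you diagnose, a volume count then fails and the sharp Bose--Burton blocking-set bound is genuinely required. So your route trades the paper's elementary, self-contained induction for a classical external input; what it buys is a one-shot, induction-free argument and, via the equality case of Bose--Burton (the dual point set must be the point set of a $(t+1)$-dimensional subspace), structural information about how near-minimal designs must be arranged around a maximal support element --- information the paper's proof does not provide.
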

\begin{proof}
Let $e$ be any non-identity element of $\mathbb{G}$, and let $-e$ be its inverse.
If $\Cc(\vv{v})=e$ for some $\vv{v}$ from $\Jq{t+1}$, 
$\Cc(\vv{u})=-e$ for every $t$-dimensional subset $\vv{u}$ of $\vv{v}$,
and $\Cc(\vv{w})=0$ for any other $\vv{w}$  from $\JJq{ t}$,
then $\Cc$ is a null design of strength $t$ with 
$1+\frac{q^{t+1}-1}{q-1}$ non-zeros.

By backward induction on $t$, we will prove that 
\textit{the number of non-zeros
of non-void null design of strength $t$ is not less than
$1+\frac{q^{t+1}-1}{q-1}$}. 
For $t=n$, the claim is trivial because there are no non-void null designs.
Assume that the claim is true for $t=\tau$, where $0<\tau\le n$; 
let us show it for $t=\tau-1$. Consider a non-void null design $\Cc$
of strength $t$. Denote 
$\Cc'=\Cc\big|_{\JJq{t{+}1}}$.
Clearly, $\Cc'$ is not constantly zero (otherwise, $\Cc$ is void as well).
If $\Cc'$ is a null design of strength $t+1$, 
then the claim is true by the induction hypothesis. 
Otherwise, there is $\vv{v}$ in $\Jq{t+1}$ such that $\Cc({\ge} \vv{v})\ne 0$.
% , where
% $$
% \Cc({\ge} \vv{v}) := \sum_{\vv{w} \in \JJq{t{+}1}:\,\vv{v}\subset\vv{w}}.
% $$
By the definition of null design, for every $t$-dimensional subspace $\vv{u}$ of $\vv{v}$,
we have $\Cc({\ge} \vv{u}) = 0$. So, the sum of $\Cc$ over $U_{\vv{u}}$, 
where $$U_{\vv{u}}:=\{\vv{w} \in \JJq{t{+}1}:\,\vv{u}\subset\vv{w},\vv{v}\not\subset\vv{w}\},$$
equals $-\Cc({\ge} \vv{v})$. Hence, $U_{\vv{u}}$ contains a non-zero of $\Cc$.
Since $U_{\vv{u}}$ and $U_{\vv{u'}}$ are disjoint for different $t$-dimensional subspaces 
$\vv{u}$ and $\vv{u}'$ of $\vv{v}$, we have at least $1+\frac{q^{t+1}-1}{q-1}$ non-zeros.
\end{proof}
The null design constructed in the proof is not uniform. So, for the number of non-zeros in a non-void uniform null design,
Theorem~\ref{th:lb} only gives a lower bound.

\begin{theorem}\label{th:up}
For every $k$, $t<k<n$,
there is a $k$-uniform % $\{0,1\}$-valued 
$\Zq$-null design $\Cc:\Jq{k}\to \{0,1\}$ 
of strength $t$ with $q^{t+1}$ non-zeros.
\end{theorem}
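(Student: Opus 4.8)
The plan is to realize the design as the indicator function of an explicitly constructed family $\mathcal{F}$ of $q^{t+1}$ subspaces, exploiting the ``affine chart'' of $k$-subspaces transversal to a fixed complement. First I would fix a direct decomposition $\Fq^n = X \oplus Y$ with $\dim X = k$ and $\dim Y = n-k \ge 1$, together with a nonzero vector $e \in Y$. For each linear functional $\phi$ on $X$, set
$$ L_\phi := \{\, x + \phi(x)\,e : x \in X \,\}. $$
Each $L_\phi$ is the graph of the linear map $x \mapsto \phi(x)e$, hence a $k$-dimensional subspace of $\Fq^n$, and $\phi \mapsto L_\phi$ is injective. I would then choose \emph{any} $(t+1)$-dimensional subspace $M_0$ of the dual space $X^*$ and put $\mathcal{F} := \{ L_\phi : \phi \in M_0 \}$, so that $|\mathcal{F}| = |M_0| = q^{t+1}$; finally let $\Cc$ be the $\{0,1\}$-valued indicator of $\mathcal{F}$ on $\Jq{k}$.

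The core of the argument is to count, for a fixed $\vv{y}$ in $\Jq{t}$, the number of $\phi \in M_0$ with $\vv{y} \subseteq L_\phi$, and to show this count is divisible by $q$. If $\vv{y}$ meets $Y$ nontrivially, then $\vv{y} \subseteq L_\phi$ is impossible for every $\phi$ (as $L_\phi \cap Y = 0$), so the count is $0$. Otherwise $\vv{y}$ projects isomorphically onto a $t$-dimensional subspace $S := \pi_X(\vv{y}) \subseteq X$ and is itself a graph $\vv{y} = \{\, s + g(s) : s \in S \,\}$ for a unique linear $g : S \to Y$; here $\vv{y} \subseteq L_\phi$ holds precisely when $g(s) = \phi(s)\,e$ for all $s \in S$. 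The set of admissible $\phi$ is therefore, when nonempty, a coset of $\ker \rho_S$, where $\rho_S : M_0 \to S^*$ denotes the restriction map $\phi \mapsto \phi|_S$.

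The decisive observation, and the single point where the choice of a \emph{one-dimensional} image direction $\langle e\rangle$ is essential, is that $\rho_S$ sends the $(t+1)$-dimensional space $M_0$ into the $t$-dimensional space $S^*$, and so has nontrivial kernel. Consequently every nonempty solution set has cardinality $|\ker\rho_S| = q^{\dim\ker\rho_S}$ with $\dim\ker\rho_S \ge 1$, which is divisible by $q$. In every case the number of members of $\mathcal{F}$ containing $\vv{y}$ is $\equiv 0 \pmod q$, whence $\Cc({\ge}\vv{y}) = 0$ in $\Zq$; this holds for all $\vv{y} \in \Jq{t}$, so $\Cc$ is a $k$-uniform null design of strength $t$ with exactly $q^{t+1}$ non-zeros.

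I expect the main obstacle to lie entirely in securing divisibility by $q$, not in producing enough subspaces: a generic family of the right size, or an arbitrary $(t+1)$-dimensional family of graphs landing in all of $\mathrm{Hom}(X,Y)$, would fail the strength condition because the relevant restriction map could be injective. Forcing every map to share a single image direction is precisely what makes $\rho_S$ non-injective for \emph{every} $t$-subspace $S$, converting each nonzero containment count into a positive power of $q$. The remaining verifications---that $|\mathcal{F}| = q^{t+1}$, that $\phi \mapsto L_\phi$ is injective, and the transversality case split---are routine, and the hypotheses $t < k < n$ enter exactly to guarantee $Y \ne 0$ (so that $e$ exists) and $\dim X^* = k \ge t+1$ (so that $M_0$ exists).
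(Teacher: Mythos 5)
Your proof is correct, and it is worth noting that while your verification is genuinely different from the paper's, the design you construct actually coincides with the paper's. The paper fixes $\vv{u}\subset\vv{v}\subset\vv{w}$ of dimensions $k-t-1$, $k-t$, $k+1$ and takes the characteristic function of $U\setminus V$, where $U$ (resp.\ $V$) is the set of $k$-subspaces between $\vv{u}$ (resp.\ $\vv{v}$) and $\vv{w}$. Your family $\{L_\phi:\phi\in M_0\}$ is exactly such a set difference with $\vv{w}=X\oplus\langle e\rangle$, $\vv{u}=\bigcap_{\phi\in M_0}\ker\phi$ (so that $M_0$ is the annihilator of $\vv{u}$ in $X^*$), and $\vv{v}=\vv{u}+\langle e\rangle$: a $k$-subspace $\vv{x}$ with $\vv{u}\subset\vv{x}\subset\vv{w}$ avoids $e$ precisely when it is a graph $L_\phi$ with $\phi\in M_0$. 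Where you diverge is the strength check. The paper counts the members of $U$ and of $V$ through a fixed $\vv{y}\subseteq\vv{w}$; both counts have the form $\frac{q^{(k+1)-\dim(\vv{u}+\vv{y})}-1}{q-1}$ (and similarly with $\vv{v}$), each $\equiv 1 \bmod q$, so their difference vanishes modulo $q$ --- a two-line computation, but one where the divisibility emerges only after cancellation of two quantities congruent to $1$. You instead show that the set of admissible $\phi$ is empty or a coset of $\ker\rho_S$, whose order is a positive power of $q$ because $\dim M_0=t+1>\dim S^*=t$; this makes the divisibility of each individual containment count structurally visible, with no Gaussian-coefficient arithmetic, and your case analysis ($\vv{y}$ meeting $Y$, or $g$ not landing in $\langle e\rangle$, or a nonempty coset) is complete. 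Both arguments use the hypotheses $t<k$ and $k<n$ in the same places and produce exactly $q^{t+1}$ non-zeros; the paper's computation is shorter, while your ``affine chart'' presentation is more explanatory, exhibiting the design explicitly in coordinates and explaining why forcing a common one-dimensional image direction is what guarantees the strength condition.
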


Note that a $\{0,1\}$-valued $\Zq$-null design is a $\Zr$-null design
for every $r$, $r|q$.

\begin{proof}
% Denote $d=k-t$. 
Take any $\vv{w}$ from $\Jq{k+1}$,
$\vv{u}$ from $\Jq{k-t-1}$, 
and $\vv{v}$ from $\Jq{k-t}$ 
such that 
$\vv{u}\subset \vv{v}\subset \vv{w}$.
Denote $U := \{ \vv{x} \in  \Jq{k}:\, \vv{u}\subset \vv{x}\subset \vv{w} \}$
and $V := \{ \vv{x} \in  \Jq{k}:\, \vv{v}\subset \vv{x}\subset \vv{w} \}$.
We state that the characteristic $\{0,1\}$-function of 
$U\backslash V$ is a required null design.

At first, we note that $|U\backslash V| = |U|-|V| = q^{t+1}$
because
$$|U| = \frac{q^{(k+1)-(k-t-1)}-1}{q-1} = \frac{q^{t+2}-1}{q-1} 
\quad \mbox{and }
|V| = \frac{q^{(k+1)-(k-t)}-1}{q-1} = \frac{q^{t+1}-1}{q-1}.$$

Next, consider $\vv{y}$ from $\Jq{t}$ and count the numbers
$$
\alpha = |\{ \vv{x} \in U :\, \vv{y}\subset \vv{x} \} | 
\quad \mbox{and }
\beta = |\{ \vv{x} \in V :\, \vv{y}\subset \vv{x} \} |.
$$
If $\vv{y} \not\subset \vv{w}$, then obviously $\alpha=\beta=0$.
If $\vv{y} \subset \vv{w}$, then we have
$$ 
\alpha = \frac { q^{(k+1)-\dim(\vv{u}+\vv{y})} - 1 } {q-1} \equiv 1 \bmod q
\quad \mbox{and }
\beta = \frac { q^{(k+1)-\dim(\vv{v}+\vv{y})} - 1 } {q-1} \equiv 1 \bmod q. $$
Anyway, $\alpha - \beta \equiv 0 \bmod q$.
Hence, $U\backslash V$ is a $\Zq$-null design of strength $t$.
\end{proof}
For $q=2$, the lower and upper bounds in Theorems~\ref{th:lb}
and~\ref{th:up} coincide.

\begin{corollary}\label{c:2}
 For $0\le t <k<n$,
the minimum number of non-zeros 
in a non-void $k$-uniform $\ZZ_2$-null design 
$\Cc:\JJq[2]{t}\to \ZZ_2$ 
of strength $t$ is $2^{t+1}$.
\end{corollary}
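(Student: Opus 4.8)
The plan is to obtain the corollary as an immediate consequence of Theorems~\ref{th:lb} and~\ref{th:up}, specialized to $q=2$. The key observation is that for $q=2$ the general lower bound and the explicit upper bound coincide at the value $2^{t+1}$, so nothing remains beyond checking that the lower bound of Theorem~\ref{th:lb} applies verbatim to the uniform case and that the construction of Theorem~\ref{th:up} genuinely takes values in $\ZZ_2$ and is non-void.

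For the lower bound, I would note that a $k$-uniform $\ZZ_2$-null design of strength $t$ is, in particular, an (unrestricted) $\ZZ_2$-null design of strength $t$ on the full domain $\JJq[2]{t}$: uniformity only constrains where the nonzeros may lie and does not affect the defining summation conditions. Hence Theorem~\ref{th:lb} applies with $\mathbb{G}=\ZZ_2$ and $q=2$, and yields at least
$$ 1 + \frac{2^{t+1}-1}{2-1} = 2^{t+1} $$
nonzeros for every non-void such design.

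For the upper bound, I would invoke Theorem~\ref{th:up} with $q=2$. Since its hypotheses $t<k<n$ coincide with those of the corollary, it supplies a $k$-uniform $\Zq$-null design $\Cc:\Jq[2]{k}\to\{0,1\}$ of strength $t$ with exactly $2^{t+1}$ nonzeros; as $\Zq=\ZZ_2$ in this case, this is a bona fide $\ZZ_2$-valued $k$-uniform $\ZZ_2$-null design, and it is non-void because $2^{t+1}>0$. This design attains the lower bound, so the minimum number of nonzeros is exactly $2^{t+1}$.

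The only point demanding a moment of care, and the closest thing to an obstacle, is the transfer in the second paragraph: one must be sure that the lower bound of Theorem~\ref{th:lb}, proved for unrestricted designs, remains valid under the uniformity restriction. This is exactly the content of the remark following Theorem~\ref{th:lb} — the bound is legitimate in the uniform case even though its extremal configuration there is not uniform — and here Theorem~\ref{th:up} furnishes a uniform configuration that meets it, closing the gap.
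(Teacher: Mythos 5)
Your proof is correct and matches the paper's (implicit) argument exactly: the corollary follows by specializing Theorem~\ref{th:lb} (whose lower bound $1+\frac{q^{t+1}-1}{q-1}$ holds for all, hence in particular uniform, designs) and Theorem~\ref{th:up} to $q=2$, where both bounds equal $2^{t+1}$. Your care about transferring the unrestricted lower bound to the uniform case is precisely the point of the paper's remark following Theorem~\ref{th:lb}, so nothing is missing.
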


\section{On Wilson matrices}
The Wilson matrix $W_{t,k}$ is a $\{0,1\}$-matrix
whose rows are indexed by $t$-subsets of $\{1,\ldots,n\}$
and
columns are indexed by $k$-subsets of $\{1,\ldots,n\}$;
the element of $W_{t,k}$
in the $\vv{y}$th row and $\vv{x}$th column
equals $1$ if and only if 
$\vv{y} \subset \vv{x}$.
By analogy, the subspace Wilson matrix $W_{q;t,k}$ is a $\{0,1\}$-matrix
whose rows and
columns are indexed by $\Jq{t}$
and $\Jq{k}$, respectively;
$W_{q;t,k}(\vv{y},\vv{x})=1$ if and only if 
$\vv{y} \subset \vv{x}$.
For the case $q=2$, we have established (Corollary~\ref{c:2}) that 
the binary linear
code with check matrix $W_{q;t,k}$ 
has minimum distance $2^{t+1}$.
(A similar result for $W_{k-1,k}$ 
was obtained in~\cite[Prop.~9]{Pot:DP}.)
However, the dimension of this code 
(equivalently, the rank of $W_{q;t,k}$)
remains unknown for $t>1$.
In the case $t=1$, $W_{q;1,k}^{\mathrm{T}}$ is a generator matrix 
of the punctured Reed--Muller code $\mathcal{R}^*(n-k,n)$,
and
$\mathrm{rank}(W_{q;1,k})=\sum_{i=0}^k\binom{n}{i}$, see e.g.~\cite[Theorem~3.14]{KeyAss:98}.
Much less is known for $q>2$. It is also known that 
$W_{q;t,k}$ is full-rank over $\mathbb{R}$, see \cite{Kantor:72}; 
the minimum 
number of non-zeros in a non-void $\mathbb{R}$-null design
$\Jq{k} \to \mathbb{R}$ is $\prod_{i=0}^t(1+q^i)$
if $k=t+1$ and is conjectured to be $\prod_{i=0}^t(1+q^i)$
if $t+2\le k < n-t$, see \cite{Cho:98} 
(with the additional restriction on the null design to be $\{0,1,-1\}$-valued, the conjecture was proved in~\cite{Kro:17:subspace}).

%   \bibliographystyle{plain}
%   \bibliography{../../k}
%   \end{document}

\providecommand\href[2]{#2} \providecommand\url[1]{\href{#1}{#1}}
  \def\DOI#1{{\small {DOI}:
  \href{http://dx.doi.org/#1}{#1}}}\def\DOIURL#1#2{{\small{DOI}:
  \href{http://dx.doi.org/#2}{#1}}}

\end{document}